\numberwithin{figure}{section}
\numberwithin{equation}{section}
\numberwithin{table}{section}
\patchcmd{\thebibliography}{\section*}{\section}{}{}
\newcommand{\ef}{\end{equation}}
\chardef\bslash=`\\ % p. 424, TeXbook
\newcommand*\colvec[3][]{
    \begin{pmatrix}\ifx\relax#1\relax\else#1\\\fi#2\\#3\end{pmatrix}
}
\newtheorem*{thm*}{Theorem}
\newtheorem{lem}{Lemma}[section]
\newtheorem*{lem*}{Lemma}
\newtheorem{corl}{Corollary}[lem]
\newtheorem*{corl*}{Corollary}
\newtheorem{prop}{Proposition}[section]
\newtheorem{prop*}{Proposition}
\newtheorem{Goal*}{Goal}
\theoremstyle{definition}
\newtheorem{defn}{Definition}[section]
\newtheorem{examp}{Example}
\newtheorem*{examp*}{Example}
\newtheorem*{remark*}{Remark}
\newtheorem*{CC*}{Crossover Conjecture}
\newtheorem*{Note*}{Note}
\newtheorem*{defn*}{Definition}
 \theoremstyle{remark}
 \renewcommand{\sectionmark}[1]{}
\newcommand{\Inv}{\operatorname{Inv}}
\newcommand{\la}{\langle}
\newcommand{\ra}{\rangle}
\newcommand{\thrh}{\frac{3}{2}}
\newcommand{\sevh}{\frac{7}{2}}
\newcommand{\eleh}{\frac{11}{2}}
\newcommand{\defect}{\operatorname{def}}
\newcommand{\diag}{\operatorname{diag}}
\newcommand{\lm}{\lambda}
\newcommand{\Lm}{\Lambda}
\newcommand{\hub}{\operatorname{hub}}
\newcommand{\cont}{\operatorname{cont}}
\renewcommand{\a}{\alpha}
\begin{document}
\title[Spin Multipartitions]{Spin Multipartitions}

\author{Ola Amara-Omari}
\address{ Bar-Ilan University, Ramat-Gan, Israel}
\email{Omariao@biu.ac.il}
\author{Mary Schaps}
\address{ Bar-Ilan University, Ramat-Gan, Israel,}
\email{mschaps@macs.biu.ac.il}

\subjclass[2020] {17B10, 17B65, 05E10, 20C08}
\maketitle
%\bigskip

 \begin{abstract} 
We give an algorithm to construct spin multipartitions. The Fock space for a general dominant integral weight is the tensor product of Fock spaces $\mathcal{F}_{i}$. Leclerc and Thibon in \cite{LT} showed that  $\mathcal{F}_0$ is a module over the quantized enveloping algebra.  We show that the relations required of a module for the quantized enveloping algebra do  hold for the remaining $\mathcal{F}_i$ with our combinatorics.
\end{abstract}

\maketitle
\section{INTRODUCTION}

To explain what we mean by a ``spin multipartition'', we give a very brief review of the theory of multipartitions for the case of type A. 
The association of partitions with simple representations of the symmetric groups can be traced back to Schur \cite{Sch}. The behavior of modules under restriction was determined by Littlewood-Richardson branching rules.  Dipper and James \cite{DJ} discovered a close connection between the representation theory of the symmetric group over a field of characteristic $p$ and the representations of an Iwahori-Hecke algebra over $\mathbb C$ at a $p$-th root of unity. Jimbo et al. \cite{JMMO} introduced higher level Fock spaces.
Kleshchev associated residues to the nodes in the partitions and developed modular branching rules.  Lascoux, Leclerc, and Thibon \cite{LLT} noticed that these modular branching rules provided a connection to the Kashiwara crystal graph as given in \cite{MM} and conjectured a close connection between the canonical bases of modules over affine Kac-Moody algebras and projective indecomposable modules over the Hecke algebra at a $p$-th root of unity. Ariki \cite{A1} proved the LLT conjectures, not only for the symmetric groups, but for the Hecke algebras of the complex reflection groups of type $G(\ell,1,d)$. The irreducible modules are no longer labeled by partitions, but by multipartitions \cite{AM}. 

We now turn to the spin case.  The projective representations of the symmetric group are representations of a central extension of the symmetric group by a cyclic group of order two. Those representations for which the generator $z$ of the kernel takes the value $1$ are called ordinary representations and the spin representations are those for which $z$ takes the value $-1$.  The theory of spin representations lags behind that of the symmetric groups, but what can be done for the ordinary representations can usually be extended eventually to spin representations. A theory of strict partitions labeling spin representations goes back to Morris and Yasseen, \cite{MY1, MY2}. The partitions representing the irreducible modules over a field of characteristic $p$ are not, as in the ordinary case, a subset of the partitions in the characteristic case.  Instead, one allows partitions that are not strict, provided that the repeated rows are divisible by $p$.
Leclerc and Thibon \cite{LT} conjectured that the spin representations also correspond to the simple modules of a highest weight representation $V(\Lambda)$ for the case $\Lambda = \Lambda_0$ of an  affine Lie algebra, and this was eventually proven by Brundan and Kleshchev in \cite{BK1}.

One can ask what happens to the labeling of the crystal base for more general highest weights $\Lambda$? In analogy to the case of cyclotomic Hecke algebras in type A, one would expect some form of multipartition, but a naive attempt by the authors ten years ago to define spin multipartitions using only strict partitions failed dismally because it could not reproduce the crystal structure. 
  
Following conventions for spin blocks of the symmetric groups, we fix an odd number $h=1+2n$ with $n \geq 1$.  
We fix a dominant integral weight $\Lambda=a_0\Lambda_0+a_1\Lambda_1+\dots \a_n\Lambda_n$ for the twisted affine Lie algebra $ A_{2n}^{(2)}$ as defined in \cite{Ka}.  For the given  highest weight $\Lambda$  the  weights in $V(\Lm)$ are of the form $\lambda=\Lambda-\alpha$ where $\alpha$ is a sum with non-negative coefficients of simple roots.

We now go back to Ariki's original introduction of the multipartitions
using tensor products \cite{A1,A2}. Since type A has a Dynkin diagram with cyclic symmetry, all the highest weight representations $V(\Lambda_i)$ are isomorphic.  This is not at all the case for the spin representations.
While it will still be true that we will need $a_0$ $h$-strict, $h$-restricted partitions, for the remaining residues we do not at all want strict partitions.  
The partitions we need will be non-strict, as in type A, but the residues will follow the order used in the case of $\Lm=\Lm_o$ in \cite{LT}.

The main result of this paper is the following:
\begin{thm*}
	Let $\mathfrak{g}$ be the affine Lie algebra of type $A^{(2)}_{2n}$ for a positive integer $n$. The level one Fock spaces $\mathcal{F}_i$ given by $i$-corner partitions are modules for the quantum enveloping algebra
	$U_q(\mathfrak{g})$.
\end{thm*}	

We hope, as in Ariki and Mathas\cite{AM}m to string the level one partitions together into genuine multipartitions using the Hopf algebra structure of the quantum enveloping algebra.
   
\section{DEFINITIONS AND NOTATION}

Let $\mathfrak g$ be the affine Lie algebra $A^{(2)}_{2n}$ as in \cite [Ch.4]{Ka},  and let $C=[a_{ij}]$ be the Cartan matrix. For $n=1$, the Cartan matrix is
\[
C=
\begin{bmatrix}
2&-4\\
-1&2
\end{bmatrix}.
\]
For the general case $n\geq 2$, we have
\[C=
\begin{bmatrix}
2&-2&0&\dots&0&0\\
-1&2&-1&0&\dots&0\\
0&-1&2&\ddots&\ddots&\vdots\\
\vdots&\ddots&\ddots&\ddots&-1&0\\
0&\dots&0&-1&2&-2\\
0&0&\dots&0&-1&2
\end{bmatrix}.
\]

 Let $Q$ be the $\mathbb Z$-lattice generated by the simple roots
$\a_0,\dots,\a_{n}$. Kac \cite{Ka} uses the notation $\a_i^\vee$ for the coroots, but we will denote the corresponding coroots by $h_0, h_1, \dots h_\ell$. 
 The pairing between a root and a coroot is given by the appropriate entry in the Cartan matrix. 
\[
\langle h_i, \a_j \rangle =a_{ij}, i,j=0,1,\dots,n.
\]

The Cartan matrix is not symmetric, but it is symmetrizable, the symmetrizing matrix being the diagonal matrix $D=\diag(\frac{1}{2}, 1,\dots, 1, 2)$. Thus in the symmetric product, $(\a_0\mid \a_0)=1$,
	$(\a_i \mid \a_i)=2$ for $i=1,2,\dots,n-1$ and $(\a_n \mid \a_n)=4$.
Other values of this symmetric product of interest to us will be $(\a_0\mid\a_1)=-1$, $(\a_{n-1}\mid\a_n)=-2$. Also $(\Lambda_i \mid \a_j)=\delta_{ij}$, for all
$i,j=0,1,\dots,\ell$.
In this paper, we will be working with a fixed non-zero
 dominant integral weight, that is to say, a weight $\Lambda$ such that $\langle h_i, \Lambda \rangle $ is a nonnegative integer for every $i$.  

The Cartan subalgebra of the Lie algebra $\mathfrak g$ is generated by 
$h_0, h_1, \dots, h_\ell$ and an element $d$ called the \textit{scaling element},
which satisfies 
\[
\langle d, \alpha_i \rangle =0, 1 \leq i \leq \ell, \langle d,\alpha_0 \rangle =1.
\]

The center of  $\mathfrak g$ is one-dimensional, spanned by 
\[
c = h_0+2\sum_{i=1}^n h_i,
\] 
which is called the \textit{canonical central element}. The \textit{level} of a weight $\lambda$ is $\langle \lambda, c \rangle$. 

Let  $Q_+$ be the subset of $Q$ in which all coefficients of the $\a_i$ are nonnegative.
The \textit{null root} $\delta$ is $2\a_0+2\a_1\dots+2\a_{n-1}+\a_{n}$.
This formula is chosen for the null root because the vector $(2,2,2,\dots,2,1)$, as a column vector, produces $0$ when multiplied by the Cartan matrix on the left. We have  $(\a_i\mid \delta)=0$. 

 We define the fundamental weights $\Lambda_j, 0 \leq j \leq \ell$ together with the null root  to be weights dual to the coroots and the scaling element, so chosen that $ \langle h_i, \delta \rangle = 0$ for all $i$, $\langle d, \delta \rangle=1$, $ \langle d, \Lambda_j \rangle = 0$ for all $j$, and
$ \langle h_i, \Lambda_j \rangle = \delta_{ij}$, where $\delta_{ij}$ is the Kronecker delta, non-zero only when $i=j$ and then equal to $1$.

The $\mathbb Z$-lattice $P$ of weights of the affine Lie algebra spans a real vector space that has two different bases.  One is given by the fundamental weights together with the null root, $\Lambda_0,\dots, \Lambda_{\ell}, \delta$, 
and one is given by $\Lambda_0, \a_0,\dots,\a_{\ell}$.  We will usually use the first basis for our weights, whereas Kac \cite{Ka} prefers the second.
 In addition, $(\Lambda_0\,\mid \Lambda_0 )=(\delta \mid \delta) =0$ and  $\la \Lm,c \ra=r$ the level defined in the previous paragraph.

 Our modules will all be left modules. Let $\Lambda$ be a fixed non-zero dominant integral weight. Let $V(\Lambda)$ be a highest weight module with  
 highest weight $\Lambda$, and let $P(\Lambda)$ be the set of weights of $V(\Lambda)$. In affine type A, if $\Lambda=a_0\Lambda_0+a_1\Lambda_1+\dots+ a_n\Lambda_n$ for nonnegative integers $a_i$, the level is given by the formula $r=a_0+2a_1+ \dots +2a_n$ . We have $r>0$ because $\Lambda$ is non-zero.

  As in \cite [\S3.3]{Kl}, we define the defect of a weight $\lambda=\Lambda -\alpha$ by 

\[
\defect(\lambda)=\frac{1}{2}((\Lambda \mid \Lambda)-  (\lambda \mid \lambda))=(\Lambda \mid \alpha)-\frac{1}{2}(\alpha \mid \alpha).
\]

Since we are in a highest weight module, we always have $(\Lambda \mid \Lambda) \geq (\lambda \mid \lambda)$ [\cite{Ka}, Prop. 11.4(a)].  The definition of the defect is general for all affine types, and our affine Lie algebra is the only one for which the defect may be a half-integer. There is an alternative definition of the defect which multiplies it by $2$ to get an integer, but we prefer to use the general definition. The weights of defect $0$ are those lying in the Weyl group orbit of $\Lambda$.
Every weight $\lambda \in P(\Lambda)$ has the form $\Lambda-\alpha$, for $\alpha \in Q_+$. We will need
\begin{align*}
\defect(\lambda-t \delta)=&(\Lambda \mid \alpha+t\delta)-\frac{1}{2}(\alpha +t\delta \mid \alpha +t\delta)\\
=&\defect(\lambda)+tr.
\end{align*}

\begin{defn}\label{deg} Let $\lambda = \Lambda - \alpha$ with $\alpha \in Q_+$. If $\alpha=\sum_{i=0}^\ell c_i \alpha_i$, where all $c_i$ are nonnegative, then the vector 
\[
\cont(\lambda)=(c_0.\dots,c_\ell)
\]
is called the \textit{content} of $\lambda$. The \textit{degree} $\deg(\lambda)$ of the weight $\lambda$ is the integer $n=\sum_{i=0}^\ell c_i$.
\end{defn}

Define
\[
\max (\Lambda)=\{\lambda \in P(\Lambda) \mid \lambda + \delta \not\in P(\Lambda)\},
\] 
and by \cite[\S 12.6.1]{Ka}, every element of 
$P(\Lambda)$ is of the form $\{y-t\delta \mid y \in \max(\Lambda), t \in \mathbb Z_{ \geq 0}\}$.

\begin{defn}\label{M} Let $W$ denote the Weyl group
	\[
	W=T \rtimes  \mathring{W}
	\] 
	where $\mathring{W}$ is the finite Weyl group of type $B_{2n}$ and   the elements of the abelian normal subgroup $T$ are transformations of the form 
	
	\[
	t_\alpha(\zeta)=\zeta+r\alpha -((\zeta|\alpha)+\frac{1}{2}(\alpha|\alpha)r)\delta,
	\]
	\noindent where the weights  $\a$  are taken from a from a lattice $M$ generated by the long roots with half-integer coefficients \cite[\S 6.5.2]{Ka}.
\end{defn}

The role of $M$ is explained before \cite[Prop. 2.11]{FSv2}, in the version of \cite{BFS} which treats the spin case in greatest detail.

\begin{defn}\label{hub} For a dominant integral weight $\Lambda$ and for any weight $\lambda$ in the set of weights  $P(\Lambda)$ of the highest weight module $V(\Lm$), we let $\hub(\lambda)=[\theta_0,\dots, \theta_{\ell}]$ be the \textit{hub} of $\lambda$, where 
	\[
	\theta_i=\la h_i, \lambda \ra.
	\]
	\noindent Let the real weight space be the $\mathbb{R}$-vector space whose basis is the set of weights of $\mathfrak{g}$. Let $U$ be the vector subspace of the real weight space generated by the fundamental weights. The hub is the projection (with respect to the decomposition $U \oplus \mathbb{R}\delta$) of the weight of $\lambda$ onto $U$. 
\end{defn} 
We write the hub with square brackets to distinguish the hub from the content. The original definition given by Fayers in \cite{Fa} was the negative of this one.

\section{DEFECTS IN THE REDUCED CRYSTAL GRAPH}

\begin{defn}\label{block-reduced}
	The set $P(\Lambda)$ can be taken as the set of vertices of a graph $\widehat P(\Lambda)$ which we will call the \textit{block-reduced 
		crystal}. Two weights $\mu,\nu \in P(\Lambda)$ will be connected by an edge of residue $i$ for $i=0,1,\dots, n$   if $\mu -\nu = \pm \alpha_i$ for a simple root $\alpha_i$. A maximal set of vertices connected by edges of residue $i$ will be called an \textit{$i$-string}
	 and all $i$-strings are finite because $V(\Lambda)$ is integrable.
\end{defn}
The block-reduced crystal was used in \cite[Def. 2.6]{AS},   or \cite[\S 2]{BFS}. 
 We now make use of the hub and its components $\theta_i$ from Def \ref{hub}.

\begin{lem}\label{defect}For a dominant integral weight 
	$\Lambda=a_0\Lambda_0+a_1\Lambda_1+\dots+a_n \Lambda_{n}$, if $\eta=\Lambda-\alpha$ for $\alpha \in Q_+$ has a nonnegative $i$-component $u=\theta_i$ in $\hub(\eta)$, then $\eta, \eta-\alpha_i,\dots,\eta-u\alpha_i$ are vertices in an $i$-string with $s_i(\eta)=\eta-u\alpha_i$. We recall that the symmetrizing matrix $D=\diag(d_0,d_1,\dots,d_n)$ where $d_0=\frac{1}{2}, d_1=1,\dots, d_{n-1}=1,d_n=2$,
	\begin{enumerate}
		\item   The defect of $\lambda=\eta-k\alpha_i$ for $0 \leq k \leq u$ is
		$	\defect(\eta)+d_ik(u-k)$.
		\item  The $i$-components of the hub descend by $2$ as we move down the $i$-string,  so the absolute values of the hub components
		 $\theta_i=\langle h_i, \lambda  \rangle$ decrease as the defect increases and then increase as the defect decreases.
	\end{enumerate}	 
			  In summary,
		\begin{center}
			\begin{tabular}{ | l || c | c | c | c | c | c | }
				\hline
				$\lambda$ &$\eta$ & $\eta-\a_i$&$\dots$ & $\eta-k\a_i$ &$\dots$ & $\eta-u\a_i$ \\ \hline \hline
				$\defect(\lambda)$ &$\defect(\eta)$&$\defect(\eta)+d_i(u-1)$&$\dots$&$\defect(\eta)+d_ik(u-k)$&$\dots$ & $\defect(\eta)$\\ \hline
				$\langle h_i,\lambda \rangle$ &u&u-2&\dots&u-2k&\dots&-u \\ \hline					
				\hline
			\end{tabular}
		\end{center}
	
\end{lem}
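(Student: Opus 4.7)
The plan is to verify the table by direct bilinear expansion, after which the existence of the $i$-string itself is standard integrable-representation theory.

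For part (2), I would start with $\langle h_i,\alpha_i\rangle = a_{ii}=2$ (valid for every index $i$ in the Cartan matrix displayed at the beginning of Section 2), so that bilinearity immediately gives
\[
\langle h_i,\eta-k\alpha_i\rangle = w-2k,
\]
which is the bottom row of the table and in particular yields $\langle h_i,s_i(\eta)\rangle=-w$. The existence of the string $\eta,\eta-\alpha_i,\dots,\eta-w\alpha_i$ as vertices of $\widehat P(\Lambda)$ then follows from the standard fact that in an integrable highest-weight module the $i$-string through a weight with nonnegative $i$-component $w$ has length $w+1$ and terminates at $s_i$ of that weight.

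For part (1), I would use the invariant-form version of the defect,
\[
\defect(\lambda)=\tfrac12\bigl((\Lambda\mid\Lambda)-(\lambda\mid\lambda)\bigr),
\]
and expand
\[
(\eta-k\alpha_i\mid\eta-k\alpha_i)=(\eta\mid\eta)-2k(\eta\mid\alpha_i)+k^{2}(\alpha_i\mid\alpha_i),
\]
which produces
\[
\defect(\eta-k\alpha_i)=\defect(\eta)+k(\eta\mid\alpha_i)-\tfrac{k^{2}}{2}(\alpha_i\mid\alpha_i).
\]
The two ingredients needed are $(\alpha_i\mid\alpha_i)=2d_i$, read off from the symmetrized Cartan matrix $DC$, and the compatibility $(\eta\mid\alpha_i)=d_i\langle h_i,\eta\rangle=d_i w$ for $\eta\in\Lambda+Q$. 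Substituting gives $\defect(\eta)+d_i kw-d_i k^{2}=\defect(\eta)+d_i k(w-k)$, the middle row of the table; the symmetry under $k\leftrightarrow w-k$ reflects the Weyl invariance of the form (so $s_i(\eta)$ has the same defect as $\eta$), and unimodality of $k(w-k)$ at $k=w/2$ produces the rise-then-fall pattern asserted alongside the fall-then-rise of $|w-2k|$.

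The only subtle point is the compatibility $(\eta\mid\alpha_i)=d_i\langle h_i,\eta\rangle$. Writing $\eta=\Lambda-\beta$ with $\beta=\sum_j c_j\alpha_j$, the $c_j$-coefficients on both sides match via the symmetry identity $(\alpha_j\mid\alpha_i)=d_j a_{ji}=d_i a_{ij}$, and the constant terms match by the normalization of the fundamental weights with respect to the invariant form, i.e.\ $(\Lambda\mid\alpha_i)$ is $d_i$ times $\langle h_i,\Lambda\rangle=a_i$. Once this compatibility is in hand, the remainder is a routine bilinear computation, so I do not anticipate a real obstacle; the chief bookkeeping concern is simply to keep straight the three distinct factors $d_0=\tfrac12$, $d_1=\cdots=d_{n-1}=1$, $d_n=2$ that appear in the $A^{(2)}_{2n}$ case and that account for the three genuinely different rates at which the defect can grow along strings of different residues.
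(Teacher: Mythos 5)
Your proposal is correct and follows essentially the same route as the paper: both reduce part (1) to a direct bilinear expansion using $(\alpha_i\mid\alpha_i)=2d_i$ and the compatibility $(\eta\mid\alpha_i)=d_i\langle h_i,\eta\rangle$ (you expand $\tfrac12((\Lambda\mid\Lambda)-(\lambda\mid\lambda))$ while the paper expands the equivalent expression $(\Lambda\mid\alpha+k\alpha_i)-\tfrac12(\alpha+k\alpha_i\mid\alpha+k\alpha_i)$, which are the two sides of the paper's own definition of the defect), and both get part (2) from $\langle h_i,\alpha_i\rangle=2$. Your explicit justification of the compatibility identity, with $(\Lambda\mid\alpha_i)=d_i\langle h_i,\Lambda\rangle$ rather than the unnormalized $\delta_{ij}$ stated earlier in Section 2, is a welcome addition since the paper uses that identity silently.
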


\begin{proof}		`	
	\begin{enumerate}
		\item 	We compute the defect explicitly for $k\alpha_i$, using $(\alpha_i|\alpha_i)=2d_i$, 
		\begin{align*}
			\defect(\lambda)&=(\Lambda|\alpha+k\alpha_i)
		-\frac{1}{2}(\alpha+k\alpha_i|\alpha+k\alpha_i)\\
		&=(\Lambda|\alpha)
		-\frac{1}{2}(\alpha|\alpha)+(\Lambda|k\alpha_i)
		-(\alpha|k\alpha_i)
		-\frac{1}{2}(k\alpha_i|k\alpha_i)\\
		&=\defect(\eta)+k((\Lambda|\alpha_i)-(\alpha|\alpha_i))-d_ik^2\\
		&=\defect(\eta)+k((\Lambda-\alpha)|\alpha_i)-d_ik^2\\
		&=\defect(\eta)+d_i k\la h_i, \Lambda-\alpha\ra-d_ik^2\\
		&=\defect(\eta)+d_ik(u-k).	
		\end{align*}		
		\item $\langle h_i, -\a_i \rangle = -2$. For $u$ even, the defect is maximal for $k= \frac{u}{2}$, and when $w$ is odd, the maximal values of the defect are obtained at $k=\frac{u-1}{2},\frac{u+1}{2}$.		
	\end{enumerate}
\end{proof}
Recall that $\Lambda=a_0\Lambda_0+a_1\Lambda_1+\dots+a_n \Lambda_{n}$. In a reduced crystal for $\Lambda$, the $i$-string from $\Lambda$ has length $a_i$. The weights $\Lambda-k\alpha_i$ necessarily lie in $\max(\Lambda)$ since  
$\Lambda-k\alpha_i+\delta$ cannot lie in $P(\Lambda)$ because   $k \alpha_i-\delta \notin Q_+$ since $\alpha_j$  is negative for every $j \neq i$. We now give a corollary to Lemma \ref{defect} regarding the crystal in Def. \ref{block-reduced}.
\begin{corl}\label{def}
	In a  block-reduced crystal $\widehat P(\Lambda)$ with $\Lambda=a_0\Lambda_0 +\dots+ a_\ell\Lambda_\ell$, the defect of $\lambda=\Lambda-k\alpha_i$ for $ 0 \leq k \leq a_i$ is $d_ik(a_i-k)$.
\end{corl}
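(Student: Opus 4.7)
The plan is to obtain the corollary as an immediate specialization of Lemma \ref{defect} to the case $\eta=\Lambda$. First I would check that the hypotheses of Lemma \ref{defect} are satisfied at $\eta=\Lambda$: namely, $\Lambda=\Lambda-0$ with $0\in Q_+$, and its $i$-component in the hub is $\theta_i=\langle h_i,\Lambda\rangle=a_i$, which is nonnegative. The paragraph preceding the corollary already observes that $\Lambda-k\alpha_i$ for $0\le k\le a_i$ lies on an $i$-string of length $a_i$ starting at $\Lambda$ (with these vertices belonging to $\max(\Lambda)$), so Lemma \ref{defect} applies with $w=a_i$.

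Next I would compute $\defect(\Lambda)$. Writing $\Lambda=\Lambda-0$ in the defect formula yields
\[
\defect(\Lambda)=(\Lambda\mid 0)-\tfrac{1}{2}(0\mid 0)=0.
\]
Substituting $\eta=\Lambda$, $w=a_i$, and $\defect(\eta)=0$ into part (1) of Lemma \ref{defect} then gives
\[
\defect(\Lambda-k\alpha_i)=\defect(\Lambda)+d_i k(a_i-k)=d_i k(a_i-k),
\]
which is the claimed formula.

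There is essentially no obstacle; the content of the corollary is just the boundary case of the lemma, with $\defect(\eta)=0$ because $\Lambda$ is the highest weight. The only point worth being careful about is confirming that the whole segment $\Lambda,\Lambda-\alpha_i,\dots,\Lambda-a_i\alpha_i$ genuinely lies in $P(\Lambda)$ so that the lemma's $i$-string hypothesis is valid, and this is exactly the content of the remark preceding the corollary that these weights lie in $\max(\Lambda)\subset P(\Lambda)$.
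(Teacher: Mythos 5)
Your proposal is correct and follows exactly the paper's own route: specialize Lemma \ref{defect} to $\eta=\Lambda$, using $\defect(\Lambda)=0$ and $\hub(\Lambda)=[a_0,\dots,a_\ell]$ so that $w=a_i$. The extra care you take in verifying that $\Lambda-k\alpha_i$ lies in $P(\Lambda)$ is the same observation the paper makes in the paragraph preceding the corollary.
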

\begin{proof}
	$\defect(\Lambda)=0$ and $\hub(\Lambda)=[a_0,\dots,a_\ell]$, so $w=a_i$.
\end{proof}

\section{THE SPIN MULTIPARTITION ALGORITHM}

As we explained in the introduction, the spin multipartitions require a more complicated algorithm than the type A $e$-restricted multipartitions because we are taking them from a tensor product in which the factors are mutually non-isomorphic.  We could have found ourselves in a situation where each residue requires a different criterion.  Fortunately, this does not appear to be the case.

Let us first recall the residue function we are using, following \cite{LT}. We have an odd integer $h=1+2n$. For any integer $m$ we define $\widehat{m}$ to be either $m-1$ or $-m$ modulo $h$, whichever is smaller when the residues in $\mathbb{Z}/h\mathbb{Z}$ are embedded in $\mathbb{Z}$ as $0,1,2,\dots,h$. 
In the $\Lambda=\Lambda_0$ case, the standard spin combinatorics puts these values in the nodes of each row of the Young diagram. Let $(s,t)$ be the row and column of a node, starting from $(1,1)$. Then the residue placed in node $(s,t)$ is $\hat{t}$.  If, for example, $h=7, n=3$, we would have the residues of each row given by $0,1,2,3,2,1,0,0,1,2,3,2,1,0,0,1,\dots$.

Setting aside for the moment residue $0$, we try a similar procedure taking the spin residue of $k_\ell+s-r+1$. We have to add $1$ because $\hat{1}=0$ and  $\hat{2}=1$. we need 
the spin residue of the node $(1,1,\ell)$ to be $k_\ell$
and the spin residue of $(1,2,\ell)$ to be $k_\ell+1$.  For example,  for a residue $k_i=1$ when $h=7$, we could get a Young diagram with residues

\young(123210012321001,01232100123210,001232,100123,2100)

\noindent with equal residues running down the diagonals as in the type A multipartitions, rather than down the columns as in the standard strict partitions used for the spin representations of the symmetric group.

\begin{defn}
	A partition  is $h$-\textit{restricted} if it does not have 
	$h$ identical columns and is $h$-strict if each partition has no identical rows unless the length of the row is divisible by $h$.
\end{defn}

\begin{defn}
	Let $\mathfrak{g}=A^{(2)}_{2n}$ and let $\Lm-(a_0\Lm_0+\dots+a_n\Lm_n)$ be a positive integral weight for $\mathfrak{g}$. A \textit{spin multipartition} for $\Lm$ is a set of $a_0+a_1+\dots+a_n$ partitions, of which
	\begin{itemize}
		\item  The bottom $a_0$ are $h$-restricted $h$-strict partitions each of which has $0$ in its upper left-hand corner when we use the English convention for Young diagrams of a partition. These will be called the  $0$-\textit{corner partitions}.
		\item For each $i>0$ in ascending order, $a_i$ partitions which are $h$-restricted but not $h$-strict.  The residue of a node with row $s$ and column $t$ will be $\widehat{(i+t-s+1)}$, and the partition will be called an $i$-\textit{corner partition}. If $0<i<j\leq n$, the $i$-corner partitions will lie below the $j$-corner partitions.
	\end{itemize}
\end{defn}

\begin{defn} 
	Given a  $i$-corner partition $\lambda$, for a residue $i>0$, and an arbitrary residue $j$,  a $j$-node adjacent to $\lm$ is an \textit{addable $i$-node} of $\lm$ if adding it produces a well-defined partition, and there are also double addable $0$-nodes. The complete set of addable $j$-nodes is the maximal set of $j$-nodes such that adding them produces a well-defined partition. Similarly,  a node of $\lm$ is \textit{$j$-removable} if  removing the node still gives a well-defined partition. The complete set of removable $j$-nodes is the set of all $j$-nodes in $\lambda$ that can be removed and will leave a well-defined partitions.
	
	Now consider a $0$-corner partition,  one of the first $a_0$ partitions in a spin multipartition.  The $j$-addable and removable nodes  can be defined as in the paragraph above, except that the partitions must remain $h$-strict.
\end{defn}

\begin{defn}\label{pm}  For any addable or removable $j$-node on  a partition, we say that a $j$-node is \textit{above} if it is above or to the right, and \textit{below} if it is below or to the left, \cite{LT} and \cite{Fa}.
	
	  In $i$-corner partitions for $i \neq 0$, when $j=0$, it may happen that a removable $0$-node lies directly above an addable $0$-node.  Such a pair of nodes will be designated by $\pm$ and we will vary the method from \cite{LT}. Even though in our definition of ``below'', we would have the addable node ``below'' the removable node, we will in this case regard the removable node as being below.
\end{defn}

Given a positive integral weight $\Lm$, we are interested in generating recursively a set of spin multipartitions that will correspond to a crystal basis in a Kashiwara crystal for $\Lm$. Such a set of spin multipartitions, if they exist, would properly be called $h$-restricted spin multipartitions.  We conjecture that the following algorithm will recursively produce such a set of spin multipartitions. We must first define the $j$-addable and $j$-removable nodes.

\textbf{Spin Multipartition Conjecture} We designate addable and removable $j$-nodes  by $+$ and $-$ as Kleshchev does \cite{Kl}  and write them from left to right as we go up in the multipartition, to get the \textit{$i$-signature}.
Every case of $\pm$ as in Def. \ref{pm} will contribute ``-+''.

	We eliminate any cases of $+-$ from the list.  By standard combinatorial arguments, the order in which the $+-$ pairs are eliminated will not effect the final result.
	
When we have made all the eliminations, we have a string of $-$ corresponding to removable nodes followed by a string of $+$ corresponding to addable nodes.  This list is called the \textit{reduced} $i$-signature. Any or all of the strings could be empty, but if there is a $+$, then the leftmost is the \textit{cogood} node which will be 
added to produce the result of operating by $f_j$ in the Kashiwara crystal and the rightmost will be called the \textit{$j$-good node} for operating by $e_j$. Starting with the empty multipartition, we construct the elements of the crystal basis by divided powers the $f_j$. The resulting multipartions will be called the \textit{$h$-restricted spin multipartitions}.

We conjecture that these $h$-restricted spin multipartitions will label the elements of the Kashiwara crystal generated from the empty multipartition by divided powers.

	\section{THE SERRE RELATIONS FOR $\Lm=\Lm_i$}

We begin with the affine Lie algebra $\mathfrak{g}=A^{(2)}_{2n}$.  Any representation of the affine Lie algebra on a vector space is a representation of the enveloping algebra $U(\mathfrak{g})$. We will eventually want to define a Fock space with a basis of multipartitions. We will start by choosing one of the fundamental weights $\Lm_i$ with $i \neq 0$ and study its Fock space, with a basis given by the appropriate partitions as described above. Eventually, for general highest weight $\Lm$, we will take a tensor product of such single-partition Fock spaces, using the Hopf algebra properties of the quantum enveloping algebra.

We work over the field $\mathbb{Q}(q)$.  For an affine Lie algebra given by a symmetrizable Cartan matrix $A=[a_{jk}]$, the quantum enveloping algebra is generated by $e_j$, $f_j$, $K_j$, $K_j^{-1}$, $D$, subject to relations depending on the Cartan matrix \cite[\S 3]{KMOY}. Set $\ell_{jk}=1-\la h_j,\a_k \ra$.

For each residue, we have a power of $q$ depending on the length of the corresponding root in the symmetric power. In our case this gives
\[
q_i=\begin{cases}
	q, &i=0\\q^2, &0< i < n \\q^4,&i=n.\\
\end{cases}
\]
To define the divided powers,
we set 
\[
[k]_{q_j}=\frac{q_j^k-q_j^{-k}}{q_j-q_j^{-1}}, [k]_{q_j}!=[k]_{q_j}\cdot\cdot\cdot[2]_{q_j}[1]_{q_j},
e_j^{(k)}=\frac{e_j^k}{[k]_{q_j}!},f_j^{(k)}=\frac{f_j^k}{[k]_{q_j}!}.
\]

We give the relations in terms of these powers of $q$.

\begin{align}
	K_jK_k=K_kK_j;&	K_jK_k^{-1}=K_k^{-1}K_j.\\
	K_je_kK_j^{-1}=q_j^{\la h_i,\a_k \ra}e_k;&	K_jf_kK_j^{-1}=q_j^{-\la h_j,\a_k \ra}f_k.\\
	[e_j,f_k]=&\delta_{jk}\frac{K_j-K_j^{-1}}{q_j-q_j^{-1}};\\
	\sum_{t=0}^{\ell_{jk}} (-1)^te_j^{(t)}e_ke_j^{(\ell-t)}=&0,j \neq k.\\
	\sum_{t=0}^{\ell_{jk}} (-1)^tf_j^{(t)}f_kf_j^{(\ell-t)}=&0,j \neq k.		
\end{align}

 We will refer to the last two relations as the quantized Serre relations. The divided powers that appear in the last two equations are defined as in \S2. 

Leclerc and Thibon \cite{LT} gave operations of $f_i$ and $e_i$ on the Fock space of $\Lm=\Lm_0$, which we denote by $\mathcal{F}_0$. Fayers extended these operations to the $h_i$ and the scaling element \cite{Fa}. The vector space $\mathcal{F}_0$ had a basis of $h$-strict Young diagrams, necessarily beginning with residue $0$. We now  let  $\mathcal{F}_i$ represent $i$-corner partitions when $i>0$. We wish to define the action of the Chevalley generators of the  $\mathcal{U}_q(A^{(2)}_{2n})$ on the Fock space $\mathcal{F}_i$.

For any node $x$, we let $A^+_i$ and $R^+_i$ represent the addable and, respectively, removable $j$-nodes above $x$.  Similarly, we let $A^-_i$ and $R^-_i$ represent the addable and, respectively, removable nodes below $x$. For $j=0$, we will have to make a slight change in the case of a $\pm$-pair, in that we will consider the removable $0$-node in a $\pm$-pair to be below the addable node.

We define
\[
N^+_j(x)= \lvert R^+_j(x) \rvert -\lvert A^+_j(x)\rvert; 
N^-_j(x)=\lvert A^-_j(x)\rvert - \lvert R^-_j(x) \rvert.
\]
When a partition $\mu$ is obtained from $\lm$ by adding or removing a $j$-node, we will identify $j$-nodes in $\lm$ and $\mu$ which are unaffected by the change. When there might be a question as to the partition in whose signature the node appears, we will expand the notation and write

\[
N^+_j(x,\mu)=  \lvert R^+_j(x,\mu) \rvert -\lvert  A^+_j(x,\mu)\rvert; 
N^-_j(x,\mu)=\lvert  A^-_j(x,\mu)\rvert - \lvert  R^-_j(x,\mu) \rvert.
\]
To define the action of the generators of $\mathcal{U}$ on $\mathcal{F}_i$, we consider any Young diagram in the natural basis of the Fock space.  We use the action given in \cite[p. 618]{AM}. Let 
$A_j(\lm)$ and $R_j(\lm)$ be the set of addable and removable nodes of $\lm$. and write $\lm^x$ for the result of adding an addable node $x$, $\lm_y$ for the result of removing a removable node $y$. Let $N^d(\lm)$ be the $0$-content of $\lm$, the number of zero residues in the partition. We set
\[
N_j(\lm)=\lvert  A_j(\lm)\rvert  - \lvert  R_j(\lm)\rvert.
\]

For $j>0$, set
\[
e_j(\lm)=\sum_{y \in R_j(\lm)} q_j^{N^+_j(y)}\lm_y.
\]
\[
f_j(\lm)=\sum_{x \in A_j(\lm)} q_j^{N^-_j(x)}\lm^x.
\]	

When $j=0$, define $g(q)=1+q^2$.  Whenever $y \in R(\lm)$ is a removable $0$-node not adjacent to an addable $0$-node or $x \in A(\lm)$ is an addable $0$-node adjacent to a removable $0$-node, we multiply the coefficient by $g(q)$. The formula in \cite{LT} is more complicated because of the possibility of multiple rows beginning with $0$ and ending in $0$, whose length is a multiple of $h$. For our partitions this does not occur.

We finish the definition by setting 
\[
K_{h_j}(\lm) = q_j^{N_j(\lm)}\lm, D(\lm)=q_j^{N^d(\lm)}\lm.
\]
We abbreviate $K_{h_j}$ by $K_j$.

 The action of $D$ commutes with the action of the $K_i$. Both are multiplication by a scalar, and since the action of each leaves the content fixed, neither action changes the other scalar.

\begin{prop}\label{first3}
	The Fock space $\mathcal{F}_i$ for $i \neq 0$ satisfies the relations 5.1-5.3 required of a $U_q(\mathfrak{g})$-module. 
\end{prop}
\begin{proof}

	We must check that applying both sides of a relation to a weight vector produces the same result. The weight vector is $\lm$ a partition in the basis of the Fock space $\mathcal{F}_i$. Because we are using the quantum version of the enveloping algebra and because the residues increase and decrease, the proof is considerably	 more complicated than the corresponding proof in \cite[p.805]{A1}. We must verify all the relations listed above.
	
	\begin{enumerate}
		\item \textbf{Relations}	$K_jK_k=K_kK_j;	K_jK_k^{-1}=K_k^{-1}K_j$.	
	    Since the action of $K_j$ or $K_j^{-1}$ is multiplication by an element of the coefficient ring, these actions commute.
    	\item \label{two} \textbf{Relations} $ K_je_kK_j^{-1}=q_j^{\la h_i,\a_k \ra}e_k;	K_jf_kK_j^{-1}=q_j^{-\la h_j,\a_k \ra}f_k$. We will check this relation for the $f_k$, the case for $e_k$ being dual. Let $\lm$ be a partition.
    	\begin{align*}
    	K_jf_kK_j^{-1}(\lm)=&K_jf_k(q^{-N_j(\lm)}\lm)\\
    	=&q_j^{-N_j(\lm)}K_jf_k(\lm)\\
    	=&q_j^{-N_j(\lm)}K_j\left( \sum_{x \in A_k(\lm)} q_k^{N^-_k(x)}\lm^x  \right)\\
    	=&q_j^{-N_j(\lm)}\left( \sum_{x \in A_k(\lm)} q_k^{N^-_k(x)}q_j^{N_j(\lm^x)}\lm^x  \right)\\ 
    	=&\sum_{x \in A(\lm)}q_j^{-N_j(\lm)+N_j(\lm^x)} q_k^{N^-_k(x)}\lm^x.  		
    	\end{align*} 
    \begin{itemize}
    	\item Case $j=k$. Then $\lm^x$ has one fewer addable $j$-nodes and one more removable $j$-node, so that $-N_j(\lm)+N_j(\lm^x)=-2$, which is precisely $-\la h_j,\a_j \ra$, so $	K_jf_jK_j^{-1}(\lm)=q_j^{-\la h_j,\a_j \ra}f_j(\lm)$.
    	\item $\mid j-k \mid \geq 2$. Adding a $k$-node cannot affect the number of addable and removable $j$-nodes because they are non-adjacent, so
    	$-N_j(\lm)+N_j(\lm^x)=0$, as required. 
    	\item $\mid j-k \mid=1$. In almost all cases, the addition of a $k$-node will add an addable $j$-node or remove a removable $j$ node, but not both.  Thus $-N_j(\lm)+N_j(\lm^x)=1$, which is precisely $-\la h_j,\a_j \ra$. 
    	
    	There are two exceptional cases, corresponding to the off-diagonal entries $-2$ in the Cartan matrix.  In both of those cases, we need to show that we get $-N_0(\lm)+N_0(\lm^x)=2$, which is $-\la h_j,\a_j \ra$. We deal first with  $j=0,k=1$ We list the configurations for adding a $1$-node to a column and leave it to the reader to check that in each case we get  $-N_0(\lm)+N_0(\lm^x)=2$. In each possible configuration, we remove two removable $0$-nodes, add two addable $0$-nodes, or do one of each. The case of adding a $1$-node to a row is dual. A `-' is filled in to the edge, and a `.' is optional. In the first configuration, we add a $1$-node to a $2$-node and create two addable $0$-nodes, in the second configuration we block a removable node and add an addable node, and in the third, we add a $1$-node at the end of a string of two removable $0$-nodes and ensure that they are no longer removable. In the fifth case, we block a removable node and add a new addable $0$-node is to the right of the `+', under an existing addable $0$-node.
    	
    	\young(--\cdot\cdot,-2,-+,-,-,.),
    	\young(---\cdot,-12,-0+,-0,.,.),\young(----\cdot,-012,-00+,.,.,.),
    	
    	\young(--\cdot\cdot,-0,-0,-+,.,.),
   		\young(----,-01\cdot,-0,-+,.,.),\young(----,-01\cdot,-00,-+,.,.)
    
    	Now we turn to the case of $j=n-1,k=n$. In our Young diagrams, we will use $j$ instead of $n-1$ for compactness. Again, adding the $n$-node will remove two removable nodes, add  two addable nodes, or do one of each. 
    	
    	\young(---\cdot,-nj,-j+,\cdot,.),\young(----\cdot,-nj-\cdot,-j+,--,\cdot\cdot),\young(----\cdot,-nj,-j+,--,\cdot\cdot)
    	
    	 The first two configurations are self-dual. The third has a dual which is also a valid configuration.
    \end{itemize}    
    	\item \textbf{Relations} $	[e_j,f_k]=\delta_{jk}\frac{K_j-K_j^{-1}}{q_j-q_j^{-1}}$.
	  We will apply the relation to a partition $\lm$.  In the process of computing $e_jf_k(\lm)$, we created a number of new partitions $\nu=(\lm^x)_y$ with coefficients $q_k^{N_k^-(x,\lm)}q_j^{N_j^+(y,\lm^x)}$ and while computing $f_ke_j(\lm)$, we create partitions $\nu'=(\lm_{y'})^{x'}$ with coefficients $q_j^{N_j^+(y',\lm)}q_k^{N_k^-(x',\lm_{y'})}$ .  To show that the relations hold, these must almost always cancel each other out, certainly when $j \neq k$, but even when $j=k$ if the new partition is not equal to $\lm$.

	   We have several cases:
		\begin{itemize}
			\item \label{2} $\mid j-k \mid \geq 2$. Let  $\nu=(\lm^x)_y$.
		 Because $j$ and $k$ are at a distan fseparations of at least $2$, the nodes  $x,y$ are not adjacent because they are separated by a diagonal.  Thus $y \in R_j(\lm^x)$ can be identified with $y \in R_j(\lm)$, and similarly, $x \in A_k(\lm_y)$ can be identified with $x \in A_j(\lm)$. Adding $x$ or removing $y$ does not affect the addability or removability of the other nodes{\tiny } so the operations are independent, with the same $q$-power coefficients of $e_jf_k$ and $f_ke_j$ so  $\nu$ is canceled by  $\nu'=(\lm_y)^x$ and with the same coefficient.   Thus $(e_jf_k-f_ke_j)\lambda=0$.
			
				\item $\mid j-k \mid =1$. First of all, we may assume $x,y$ are not adjacent, for if they were, then $y$ would not be a removable node of $\lm^x$ and $x$ would not be an addable node in $\lm_y$.   Since they are not adjacent,  adding $x$ does not affect the removability of $y$ and vice versa, so  $\nu=(\lm^x)_y$ and  $\nu'=(\lm_y)^x$ can be identified. The problem is to assure that $\nu$ and $\nu'$ have the same coefficient. If the addable $k$-node $x$ occurs before the removable $j$-node $y$, then neither exponent is changed, i.e. $N_k^-(x,\lm)=N_k^-(x,\lm_y)$,
				because both depend depends on nodes  below $x$ and similarly $N_k^+(y,\lm)=N_k^+(y,\lm^x)$   depends on nodes above $y$. 
				
				If the addable node occurs after the removable node, then both coefficients change, and our problem is to show that they change by the same factor. When $1 \leq j,k \leq n-1$, if we act first by $f_k$, adding a $k$-node either adds an addable $j$-node to the $j$-signature or removes a removable $j$-node, so $N_j^+(y,\lm^x)=N_j^+(y,\lm)-1$. If we act first  by $e_j$, then the removal of a $j$-node before $x$ either removes an addable $k$-node or creates a removable $k$-node, which  means that  $N_k^-(x,\lm_y)=N_k^-(x,\lm)-1$. Since for this range of residues $q_j=q_k$,  the total change of coefficient is the same.
				
				Of the 
				  two special cases, we start with  the pair $0,1$, where  $q_0=q$, $q_1=q^2$. Without loss of generality, we can assume the $j=0, k=1$ since interchanging them would just multiply the result by $-1$. As we showed in (\ref{two}), adding a $1$-node will change the $0$-signature by reversing two nodes, so that $N_0^+(y,\lm^x)=N_0^+(y,\lm)-2$, and the coefficient is divided by $q_0^2$. When we remove $y$, we get a change of only $1$ in the $1$-signature,  $N_1^-(x,\lm_y)=N_1^-(x,\lm)-1$, so the coefficent is divided by $q_1$. Since
				  $q_1=q^2=q_0^2$, the coefficients are equal.  The case of $n-1,n$ works the same, since adding an $n$-node reduces the $n-1$ signature by $2$ and $q_n = q_{n-1}^2$.

			\item $j=k$, $j \neq 0$. First of all, if we add an addable $j$-node $x$ and remove a different removable $j$-node $y$, then they cannot be adjacent, and the same argument used in the case $\lvert j-k\rvert=1$ will show that the coefficients of $(\lm^x)_y$ and $(\lm_y)^x$ are the same. Thus our concern is with the case where we are adding an addable node and then removing it or removing a removable node and adding it back.
			
			 Suppose the partition  $\lm$ has $a$ removable $j$-nodes and $b$ addable $j$-nodes.  This means that $K_j(\lm)=q_j^{b-a}$, so we need to show that  $[e_j,f_j]	(\lm)=\frac{K_j(\lm)-K_j^{-1}(\lm)}{q_j-q_j^{-1}}	=[b-a]_{q_j}(\lm)$.
			
			We will assume that there are more addable nodes than removable nodes, $b>a$. The opposite case is dual with $f_j,e_j$ interchanged. We partition the raw signature into alternating intervals of positivity, containing only $`+'$ signs and intervals of equality, containing equal numbers of plus and minus signs. There may be many ways to choose these intervals but that will not affect the proof.
			
			We perform induction on $a$, starting with the case $a=0$.  Since, in this case,  there are no $j$-removable nodes, $e_j(\lm)=0$, and we need only show that $e_jf_j(\lm)=[b]_{q_j}\lm$. Under $f_j$, the coefficient of the partition $\lm^x$ is $q_j^r$, where $r$ is the number of plus signs before $x$. The number of addable nodes after $x$ is then $b-r-1$, and these enter as negative in calculating the coefficient for $e_j$, so we multiply by $q_j^{r-b+1}$.  Combining these, the total coefficient is $q_j^{2r-b+1}$, where $r$ ranges from $0$ to $b-1$. Thus the total coefficient of $\lm$ is $[b]_{q_j}$, as required.
			
			So we now take as our induction hypothesis that the relations hold for all $b$ and for all integers up to $a$, in such a way that the copies of $\lm$ coming from the equality intervals all cancel out, and the copies of $\lm$ coming from the positivity intervals give the required coefficient. Given a signature with $a+1$ removable nodes and $b+1$ addable nodes, we choose an adjacent `+-' or `-+' in one of the equality intervals, denoting the addable node by $x$ and the removable node by $y$. Start with the case `+-'. If we act first by $f_j$, we multiply $\lm^x$ by $q_j^{N^-_j(x)}$ and then returning to $\lm$ by removing $x$ again, we multiply by $q_j^{N^+_j(y)+1}$ because we have a new removable node to count. If we start with $e_j$, then its coefficient is  $q_j^{N^-_j(y)}$ but when we act by $f_j$, we have an extra addable node, givng $q_j^{N^+_j(x)+1}$.  If our original pair was
			 `-+', we would have to subtract one instead of adding. In either case, the final result is two copies of $\lm$ with coefficients that cancel out.
			\item $j = k = 0$: Once again, the interesting case is where we either remove a $0$-node and add it back or add a $0$-node and remove it again.  In either case we get $\lm$, and we must calculate the coefficient of $\lm$.  As before, we will do the case $b>a$, the other case being dual, and we will do it by induction on $a$. However, this time the case  is more complicated because, when we add the second of two adjacent $0$-nodes, we multiply the coefficient by $g(q)=1+q^2$.
			
			The configurations given above in (\ref{two}) were intended to give all possible configurations for adding a $1$-node, but if we either remove the $+$ or replace it with $1$, we get all possible configurations for addable or removable $0$-nodes, since every $0$-node is adjacent to a $1$-node. Adding or removing one of these nodes does not create new addable or removable nodes. From examining the configurations, we see that $0$-nodes come in pairs, the possibilites being $++$, $\pm$, $-+$, and $--$. We have three possibilities, which we will represent by $2,1,0$, according to the number of removable nodes, occurring in a string of length $(b+a)/2$. We will call this string the doubled signature.
			
			When $a=0$, we have only $0$ in the doubled signature, and the results for adding and then removing an addable node are just as in the case $j \neq 0$. Now, assuming $b>a$, we divide the doubled signature into positivity intervals containing only $0$, or equality invervals containing only $1$'s and equal numbers of $0$ and $2$. Since an equality interval does not affect $N_0^-(x)$ or $N_0^+(y)$.  First suppose that one of the equality intervals contains a $1$, which represents a $-+$ or a $\pm$. If it were omitted, then by our induction hypothesis, all the copies of $\lm$ coming from the equality intervals would cancel out. Furthermore, $N_0^-(x)$ or $N_0^+(y)$ for all other $x$ and $y$ would be unaffected. We cannot operate by either $e_0f_0$ or $f_0e_0$  if we take $x$ to be the $+$ and $y$ to be the minus. 
			
			Suppose we take $x$ to be addable and then removable. Then the exponent of $q_0$ in the coefficient will be $N_0^-(x)(1+q_0^2)$ for $\lm$ and then $N_0^-(x)$ for $\lm^x$, which is $N_0^-(y)+1$ for $\lm$, because $y$ is also counted as a removable $0$ node.  In the opposite direction, if we first remove and then add back $y$, we get $N_0^+(y)(1+q_0^2)$ for removing $y$ and then
			$N^-(y)+1$ for $\lm_y$.
			
			Finally, we consider the actions from an adjacent $02$ or $20$.  As always, adding an addable node from the pair and removing another node elsewhere will cancel out removing and then adding those same nodes. Our problem will be removing and adding the same node $y$ or adding and removing $x$.  We take the nodes lying on the boundary.  Start with $02$, letting $x$ be the leftmost addable node, and $y$ the rightmost removable . We multiply by $q_0$ to the power $N_0^-(x, \lm)$. Between $x$ and $y$, we have an addable and a removable $0$-node, which cancel out, so that $N_0^+(x, \lm^x)=N^+(y,\lm)+1$, the added $1$ being for $y$, and we 		
			mulltiply by $q_0$ to the power $an_0^+(x,\lm^x)+1$.  On the other hand, if we remove $y$ and then restore it, the exponent is $N_0^+(y)$ and the second is $N_0^-(y,\lm_y)$, which equals $N_0^-(x, \lm)+1$.
		\end{itemize}
	\end{enumerate}
\end{proof}

In the Appendix B to the paper on perfect crystals by Kashiwara, Miwa, Petersen and Yung, \cite{KMPY}, the authors proved that for any symmetrizable Kac-Moody Lie algebra, if one can prove that an integrable module satisfies the relations 5.1-5.3, then it satisfies the remaining relations.  To make this paper self-contained, we have proven the quantized Serre relations, or, at least, one of the dual pair.
We split the proof into two cases, when $a_{jk}=-1$ and the more complicated case when $a_{jk}=-2$.

We first prove a lemma.

\begin{lem}\label{divided}
	For $\Lm = \Lm_i$, $i \neq 0$, let $\mu$ be a partition.
	\begin{enumerate}
		\item If $\mu$ has addable, non-adjacent, $j$-nodes $x,y$ with $x$ before $y$, let $R_{x,y}=q_j^{N_j^-(x,\mu)+N_j^-(y,\mu)}G_{x,y}(q)$, where if $j \neq 0$, $G_{x,y}(q)=1$ and if $j=0$, then $G_{x,y}(q)$ is the product of as many copies of $g(x)=1+q_j^2$ as there are addable nodes in the set $x,y,z$ which is the second $0$ in a pair.
		Then in $f_j^{(2)}(\mu)$, the coefficient of $\mu^{x,y}$ is 
		\[
		q_j^{-1}R_{x,y}.
		\]
				\item If $\mu$ has three addable non-adjacent $j$-nodes $x,y,z$ with $x$ before $y$ and $y$ before $z$, define 
			$R_{x,y,z}=q_j^{N_j^-(x,\mu)+N_j^-(y,\mu)+N_j^-(z,\mu)}G_{x,y,z}(q)$	 with $G_{x,y,z}(q)$ defined as above.			
				 Then in $f_j^{(3)}(\mu)$, the coefficient of $\mu^{x,y,z}$ is 
		\[
		q_j^{-3}R_{x,y,z}.
		\]
	\end{enumerate}
\end{lem}
\begin{proof}
	\begin{enumerate}
		\item If an addable node $x_1$ is before an addable node $x_2$ then $N_j^-(x_2,\mu^{x_1})=N_j^-(x_2,\mu)-2$ because the addable node which contributed $+1$ to $N_j^-(x_2,\mu)$ is now removable and contributes $-1$ instead. On the other hand, $N_j^-(x_1,\mu^{x_2})=N_j^-(x_1,\mu)$ because nodes after $x_1$ to not contribute to the signature. 
		\begin{align*}
		 f_j^{2}(\mu)=&q_j^{N_j^-(x,\mu)+N_j^-(y, \mu^x)}G_{x,y}(q)(\mu^x)^y+q_j^{N_j^-(y,\mu)+N_j^-(x, \mu^y)}G_{x,y}(q)(\mu^y)^x+\dots\\
		 =&q_j^{N_j^-(x,\mu)+N_j^-(y, \mu)}G_{x,y}(q)(q_j^{-2}+1)\mu^{x,y}+\dots.
		 \end{align*}
	 To get the divided power we divide by $q_j+q_j^{-1}$ and get the desired result.
	 \item As in $(1)$, in calculating 
	 $N_j^-(x_2, \lm^{x_1,x_3})$, addable nodes after $x_2$ can simply be erased from $\mu$ but each addable node $x_1$ before $x_2$ with contribute $-2$ to the exponent. In the case $j=0$, the extra factors of $1+q^2$ which are included when adding a second zero to an adjacent zero are included in $Q$.
	 \begin{align*}
	 	f_j^{3}(\mu)=&q_j^{N_j^-(x,\mu)+N_j^-(y, \mu^x)+N_j^-(z, (\mu^x)^y)}G_{x,y,z}(q)((\mu^x)^y)^z+\dots\\
	 		 	+&q_j^{N_j^-(y,\mu)+N_j^-(x, \mu^y)+N_j^-(z, (\mu^y)^x)}G_{x,y,z}(q)((\mu^y)^x)^z+\dots\\
	 		 	+&\dots\\
	 		 	+&q_j^{N_j^-(z,\mu)+N_j^-(y, \mu^z)+N_j^-(x, (\mu^z)^y)}G_{x,y,z}(q)((\mu^z)^y)^z+\dots\\	 		 	
	 		 	 	=&(q_j^{-6}+2q_j^{-4}+2q_j^{-2}+1)R_{x,y,z}\mu^{x,y,z}+\dots.
	 \end{align*}
	 To get the divided power we divide by $[2]_{q_j}[3]_{q_j}=q_j^{-3}+2q_j^{-1}+2q_j+q_j^{3}$ and get the desired result.
	\end{enumerate}
\end{proof}

\begin{prop}\label{S.2}
	When $\ell=1-a_{jk} \leq 2$, the quantized Serre relations hold.
\end{prop}

\begin{proof}
		   When $a_{jk}=0$, this asserts that they commute, Since this happens when $\lvert j-k\rvert \geq 2$, the operations are independent, as we argued earlier, thus they do indeed commute.
		
		Now we consider the case that $j,k$ differ by one, where the problems arise. 
		We start with when $j\neq 0$ and $k \neq n$.  In all these cases,  $a_{jk}=-1$. Then $\ell=1-a_{jk} \leq 2$. We will, as usual, do the case of $f_j$, the case of $e_j$ being dual. With $\ell=2$, we are reduced to establishing that
		\[
		f_j^{(2)}f_k+	f_kf_j^{(2)}=f_jf_kf_j.
		\] 
		
			Now take a general partition $\lm$, and let $\lm'$ be one of the partitions obtainable by adding two $j$-nodes and one $k$-node.
	
		  Let $x,y$ be the addable $j$-nodes leading the $\lm'$, with $x$ below $y$. Let $w$ be the addable $k$-node.  We must now have a notation for the position of $w$ with respect to $x$, $y$. 
		
		 If $w$ is adjacent to $x_i$, then we write $x_i \rightarrow w$ if $x_i$ must be added before $w$ can be added, and we write $w \rightarrow x_i$ if $w$ must be added before $x_i$.  Thus in describing the positions of the addable nodes, we must choose an ordering $\sigma$ from among  the following seven possibilities:
		\begin{itemize}
			\item Non-adjacent addable nodes: $\sigma_1=(w,x,y), \sigma_2=( x,w,y), \sigma_3=(x,y,w)$.
			\item Adjacent addable nodes: $\sigma_4=(w \rightarrow x, y),\sigma_5=(x \rightarrow w,y),\sigma_6=(x, w \rightarrow y),\sigma_7= (x, y \rightarrow w).$
		\end{itemize}
		
			We let $A$ denote the path $j,j,k$, we let $B$ denote the path $j,k,j$ and we let $C$ denote the path $k,j,j$. The case of $B$ splits into two cases, $B_1$ when the path starts by adding $x$ and  $B_2$ when the path starts by adding $y$.  If $w \rightarrow x_i$, then $B_i$ will be zero because $x_i$ cannot be added before $w$. If there is some $i$ for which $x_i \rightarrow w$, then $C$ will be zeri.

			The coefficient of $\lm'$ along each path depends on the  total collection of $+$ and $-$, nodes, but when the nodes to be added are not adjacent, we can divide out by the common coefficient
		\[
		Q=Q(\lm, \{x,y,w\}) = q_j^{N_j^-(x,\lm)}q_j^{N_j^-(y,\lm)}q_k^{N_k^-(w,\lm)}G_{x,y,z}(q).
		\]
		
		When one of the addable nodes depends on the addition of another node, we will calculate its original coefficient after the addition of that extra node.
		What remains after dividing by $Q$ depends on the local situation at the three nodes $x, y, w$.  These are the coefficients which we will calculate.
		
		\noindent \textbf{Case 1:} $1 \leq j,k \leq n-1$.  In this case, $q_j=q_k=q^2$.
		\begin{itemize}
			\item $\sigma_1=(w,x,y)$: The concept ``before'' in a spacial sense, pertaining to the signature, was defined for nodes of the same reside, but we can extend it to non-adjacent nodes of residues which differs by $1$, since any node of residue $k$ is adjacent to a node of residue $j$. With this extension, we want $w$ to be before $x$ which is before $y$. We let $\lm$ be any partition that has three addable nodes in the given configuration, and let $\lm_1$ be the unique partition obtained by adding the $k$-node $w$ and the two $j$-nodes $x,y$. In calculating exponents, we can automatically discard any added nodes which are after the currently added node spacially, so that in the calculation below for $A$, we substitute $N_k^-(w,\lm)$ for $N_k^-(w,\lm^{x,y})$ because $x$ and $y$ were assumed to be after $w$ in our extended version of before and after.
\begin{enumerate}
	\item For operation $A$, we apply Lemma \ref{divided} (1).
	\begin{align*}
		f_kf_j^{2}(\lm)=&f_k(q_j^{-1}q_j^{N_j^-(x,\lm)+N_j^-(y,\lm)} \lm^{x,y})+\dots\\
			=&q_k^{N_k^-(w,\lm^{x,y})}q_j^{N_j^-(x,\lm)
			+N_j^-(y,\lm)}q_j^{-1}\lm_1+\dots\\
			=&q_k^{N_k^-(w,\lm)}q_j^{N_j^-(x,\lm)
			+N_j^-(y,\lm)}q_j^{-1}\lm_1+\dots\\
		=&Qq_j^{-1}\lm_1+\dots.\\
	\end{align*}
	\item Here we will use $N_k^-(w,\lm)=N_k^-(w,\lm^x)=N_k^-(w,\lm^y)$  because $x$ and $y$ were assumed to be after $w$. We will also need $N_j^-(y,\lm^{w,x})=N_j^-(y,\lm)+1-2$ and $N_j^-(x,\lm^{w,y})=N_j^-(x,\lm)+1$. The $1$ comes in each case because the addition of $w$ either blocks a removable $j$-node or adds an addable $j$-node.  The $-2$ in the case of $y$ comes because the preceding addable $j$-node $x$ became a removable $j$-node after $x$ was added.
	\begin{align*}
		f_jf_kf_j(\lm)=&f_j(f_k(q_j^{N_j^-(x,\lm)}\lm^x+q_j^{N_j^-(y,\lm)}\lm^y)))+\dots\\
		=&f_j(q_k^{N_k^-(w,\lm)}(q_j^{N_j^-(x,\lm)}\lm^{w,x}+q_j^{N_j^-(y,\lm)}\lm^{w,y}))+\dots.\\
		=&q_k^{N_k^-(w,\lm)}(q_j^{N_j^-(x,\lm)+N_j^-(y,\lm^{w,x})}\\	+&q_j^{N_j^-(y,\lm)+N_j^-(x,\lm^{w,y})})\lm_1+\dots\\
		=&Q(q_j^{-1}+q_j)\lm_1+\dots.
	\end{align*}
\item We again apply Lemma \ref{divided} (1). Now we also use that 
$N_j^-(x,\lm^{w})=N_j^-(x,\lm)+1$ and $N_j^-(y,\lm^{w})=N_j^-(y,\lm)+1$.
	\begin{align*}
	f_j^{2}f_k(\lm)=&f_j(q_k^{N_k^-(w,\lm)} \lm^{w})+\dots\\
	=&(q_k^{N_k^-(w,\lm)} q_j^{-1}q_j^{N_j^-(x,\lm^w)+N_j^-(y,\lm^w)} \lm^{w,x,y})+\dots.\\
	=&q_k^{N_k^-(w,\lm)}q_j^{N_j^-(x,\lm)
		+N_j^-(y,\lm)}q_j\lm_1+\dots\\
	=&Qq_j\lm_1+\dots.\\
\end{align*}

\end{enumerate}			
			
			This is summarized in the first row of the table below.  In the subsequent $\sigma_i$, will will not go into the same detail.  The coefficient $Q$ depends on the choice of $\lm$, but the change in the coordinates for $A,B,C$ depends only on $\sigma_i$, which gives the spacial ordering of the addable nodes, while $A,B.C$ give the temporal ordering.		
				\item $\sigma_2=( x,w,y)$: The partition $\lm$ is any partition that has nodes $x,w,y$ in that spacial ordering and $\lm_2$ is the unique partition obtained by adding all three addable nodes to $\lm$. The addition of $x$ before $w$ will multiply the coefficient of $\lm_2$ by $q_k$, since it will either remove a removable $k$-node or add an addable $k$-node. This will occur in $A$ and in $B_1$. The addition of $w$ before $y$ will multiply by $q_j$, and this will happen in $B_2$ and $C$.
			\item $\sigma_3=(x,y,w)$: From Lemma \ref{divided}, we get $q_j^{-1}q_j^{N_j^-(x,\lm)+N_j^-(y,\lm)}q_k^{N_k^-(w,\lm^{x,y})}$ in $A$. Th two nodes $x, y$ before $w$ will give $N_k^-(w, (\lm^x)^y)=N_k^-(w,\lm)+2$ so the coefficient of change is $q_k^2q_j^{-1}$. In $C$, the $w$ is added before these and so the $k$-signature is unchanged, and also $N_j^-(x,\lm^w)=N_j^-(x,\lm)$ $N_j^-(y,\lm^w)=N_j^-(y,\lm)$ since $w$ is after $x,y$ spacially.  Now consider $B_1$.  The node $x$ added before $w$ will multiply the coefficient by $q_k$ since it will either block a removable $k$ node or add an addable $k$-node, $N_k^-(w, \lm^x)=N_k^-(w,\lm)+1$.  However, when we add $y$, we multiply by $q_j^{-2}$ because the addable node at $x$ turned to a removable node $x$, and $w$, being above $y$, does not make any change, giving $q_kq_j^{-2}$. In $B_2$, on the contrary, we add $y$ while $x$ is still addable, and get a coefficient $q_k$. When we substitute $q_k=q_j$, we get that the coefficients associated to $A$ and $B_2$ cancel those from $C$ and $B_1$.
			
			\item In the remaining four orderings of the addable nodes, we expect that some of the operatiosn will be impossible because of the precedence. We start with 	$\sigma_4=(w \rightarrow x, y)$.	The coefficients for $A$ and for $B_1$ are zero.  For $C$, the coefficient of change from the addition of $w$ is $1$, and similarly, for $f_j^{(2)} $, we get $1$ after division. For $B_2$, adding $y$ is also $1$, $x$ is not yet an addable node, The coefficients of change both for $w$ and afterwards for $x$ are also $1$. 
			
			\item For $\sigma_5=(x \rightarrow w, y)$, we have $C$ and $B_2$ equal to $0$. The operation $f_j^{(2)}$ in $A$ gives $1$, 	and there is no change in the number of addable or removable nodes before $w$, so the coefficient there is also $1$. 	
			
			\item For $\sigma_6 = (x,w \rightarrow y$), both $A$ and $B_2$ are zero, since there is no possiblity to add $y$ before $w$. In $B_1$, the addition of $x$ multiplies the coefficient for adding $w$ by $q_k$, and then the new removable $j$-node $x$ multiplies by $q_j^{-1}$. In $C$, the addition of $w$ makes no change in the coefficient and the divided power also give $1$. 
			
			\item For $\sigma_7 = (x,y \rightarrow w$), $B_1$ and $C$ are $0$. The addition of $x$ before $w$ in $A$ multiplies the coefficient by $q_k$. In $B_2$, the addable node at $x$ is already part of $Q$. as is the coefficient for adding $w$.  None of the new nodes is before $x$, so the total change coefficent is $1$.
			
		\end{itemize}

					\begin{center}
				\begin{tabular}{ | l || c | c | c | c |  }
					\hline
					$\sigma$ &$A$ & $B_1$&$B_2$  & $C$ \\ \hline \hline
					$\sigma_1$ &$q_j^{-1}$&$q_j^{-1}$&$q_j$&$q_j$\\ \hline
					$\sigma_2$ &$q_kq_j^{-1}$&$q_kq_j^{-1}$&$1$&$1$\\ \hline
					$\sigma_3$ &$q_k^2q_j^{-1}$&$q_kq_j^{-2
					}$&$q_k$&$q_j^{-1}$\\\hline
					$\sigma_4$ &$0$&$0$&$1$&$1$\\ \hline
					$\sigma_5$ &$q_j^{-1}$&$q_j^{-1}$&$0$&$0$\\ \hline
					$\sigma_6$ &$0$&$q_kq_j^{-2}$&$0$&$q_j^{-1}$\\ \hline
					$\sigma_7$ &$q_kq_j^{-1}$&$0$&$1$&$0$\\ \hline \hline					
					\hline
				\end{tabular}
			\end{center}

 The sum of the coefficients for $A$ and $C$ equals the sum of the coefficients for $B_1$ and $B_2$.

\noindent \textbf{Case 2:} $j=1, k=0$ or $j=n, k=n-1$.  In both of these cases, $a_{jk}=-1$, so we have the same equation of length $3$.	Now, however, $q_j=q_k^2$. As we explained in (\ref{two}), when we add a $1$-node, the coefficient coming from the $0$-signature changes by $2$.  Similarly, when we add an $n$-node, the coefficient in the $n-1$-signature changes by $2$. Thus we get the same table as in the previous case, except that the powers of $q_k$ are multiplied by $2$, as follows:
		
\begin{center}
	\begin{tabular}{ | l || c | c | c | c |  }
		\hline
		$\sigma$ &$A$ & $B_1$&$B_2$  & $C$ \\ \hline \hline
		$\sigma_1$&$q_j^{-1}$&$q_j^{-1}$&$q_j$&$q_j$\\ \hline
		$\sigma_2$ &$q_k^2q_j^{-1}$&$q_k^2q_j^{-1}$&$1$&$1$\\ \hline
		$\sigma_3$ &$q_k^4q_j^{-1}$&$q_k^2q_j^{-2}$&$q_k^2$&$q_j^{-1}$\\\hline
		$\sigma_4$ &$0$&$0$&$1$&$1$\\ \hline
		$\sigma_5$ &$q_j^{-1}$&$q_j^{-1}$&$0$&$0$\\ \hline
		$\sigma_6$ &$0$&$q_k^2q_j^{-2}$&$0$&$q_j^{-1}$\\ \hline
		$\sigma_7$ &$q_k^2q_j^{-1}$&$0$&$1$&$0$\\ \hline \hline					
		\hline
	\end{tabular}
\end{center}
		
As in the previous case, the sum of the coefficients for $A$ and $C$ equals the sum of the coefficients for $B_1$ and $B_2$.
\end{proof}

\begin{examp}
	Let $h=7$, $n=3$, $\Lm=\Lm_3$, $j=1,k=2$.  We take a partition $\lm=(2,2,2,1^3)$, which has a raw $1$-signature $(-,+,+)$ and a raw $2$-signature $(+)$. To $\lm$ we are to add two $1$-nodes $x,y$ and one of two  $2$-nodes $w$ or $z$. There are three ways to do this corresponding to 
	$\sigma_1$ and $\sigma_7$.  We denote the resulting partitions by $\lm_1=(3,2,2,2,1^3)$ and $\lm_7=((3,3,2,2,1^2)$.  The Young diagrams with residues are 
	
	$\lm=$ \young(32,23,12,0,0,1),	$\lm_1=$\young(32y,23,12,0x,0,1,w)=\young(321,23,12,01,0,1,2)	$\lm_7=$\young(32y,23z,12,0x,0,1)=\young(321,232,12,01,0,1).
	
	The main point to this example is to illustrate the coefficient $Q_i$ which is crucial to the proof of Prop. \ref{S.2}, so we will make the calculation in full.  
	\begin{itemize}
		\item $Q_1= q_j^{N_j^-(x,\lm)}q_j^{N_j^-(y,\lm)}q_k^{N_k^-(w,\lm)}=q_j^{-1}q_j^0q_k^0=q_j^{-1}.$
		\item $Q_7= q_j^{N_j^-(x,\lm)}q_j^{N_j^-(y,\lm)}q_k^{N_k^-(z,\lm^y)}=q_j^{-1}q_j^0q_k^0=q_j^{-1}$.
	\end{itemize}

We now calculate  	$f_kf_j^{(2)}(\lm)$, $f_jf_kf_j(\lm)$, and	$f_j^{(2)}f_k(\lm)$, giving the coefficient of $\lm_1$ in the first row of the table below and the coefficient of $\lm_7$ in the second row. As usual, after calculating $f_j^2$' we must divide by $q_j^{-1}+q_j$ to get the divided power.
\begin{align*}
	f_kf_j^{(2)}(\lm)=&q_j^{-1}q_j^{-1+0}f_k(\lm^{x,y})\\
	=&q_j^{-2}(\lm_1+q_k\lm_7).\\
\\
    f_j	f_kf_j(\lm)=&q_j^{-1}f_jf_k(\lm^x)+f_jf_k(\lm^y)\\
	=&q_j^{-1}f_j((\lm^x)^w)+(f_j((\lm^y)^w)+f_j((\lm^y)^z)\\
	=&q_j^{-2}\lm_1+\lm_1+q_j^{-1}\lm_7.\\
\\
	f_j^{(2)}f_k(\lm)=&f_j^{(2)}(\lm^w)\\		=&(q_j^{N(x,\lm^w)+N(y,\lm^{w,x})}+q_j^{N(y,\lm^w)+N(x,\lm^{w,y})})/(q_j^{-1}+q_j)\\
=&(q_j^{0+(-1)}+q_j^{1+0})/(q_j^{-1}+q_j)\lm_1\\
=&\lm_1.\\
\end{align*}

To summarize, we  give the coefficient of $\lm_1$ in the first row of the table below and the coefficient of $\lm_7$ in the second row. 
 For  $f_jf_kf_j(\lm)$ we get a sum of which the first summand comes from adding the node $x$ before $w$ or $z$,  if possible, and otherwise $0$ and the second summand  from adding $y$ before $w$ or $z$. The last column contains the $Q_i$, which we calculated above.  In order to get the corresponding row in the table in the proof, we divide by $Q$.
		
\begin{center}
	\begin{tabular}{ | l || c | c | c | c |  }
		\hline
		$\sigma$ &$f_k	f_j^{(2)}$ & $f_jf_kf_j$&$f_j^{(2)}f_k$  & $Q$ \\ \hline \hline
		$\sigma_1$ &$q_j^{-2}$&$q_j^{-2}+1$&$1$&$q_j^{-1}$\\ \hline \hline				
		$\sigma_7$ &$q_j^{-2}q_k$&$0+q_j^{-1}q_k$&$0$&$q_j^{-1}$\\ \hline \hline					
		\hline
	\end{tabular}
\end{center}

\end{examp}
\begin{prop}\label{S.3}
	When $\ell=1-a_{jk} = 3$, the quantized Serre relations hold.
\end{prop}
\begin{proof}

Finally, we consider $j=0, k=1$ or $j=n-1,k=n$, in which case $a_{jk}=-2$.  The quantized Serre relaitons for $f_j,f_k$ is equivalent to 
\[
f_j^{(3)}+f_jf_kf_j^{(2)}=f_j^{(2)}f_kf_j+f_kf_j^{(3)}.
\]		
As in the case of $a_{jk}=-1$, we will let $x,y,z$ be the addable $j$-nodes, which we assume non-adjacent and $w$ be the addable $k$ node. We start with a partition $\lm$.
	
	The possible paths to the new partitions $\lm'$ can be described as follows: 
	\begin{itemize}
		\item $A=f_kf_j^{(3)}$,
		\item $B_1=f_jf_kf_j^{(2)}$, with $x$ added last.
		\item $B_2=f_jf_kf_j^{(2)}$, with $y$ added last.
		\item $B_3=f_jf_kf_j^{(2)}$, with $z$ added last.
		\item $C_1=f_j^{(2)}f_kf_j$, with $x$ added first.
		\item $C_2=f_j^{(2)}f_kf_j$, with $y$ added first.
		\item $C_3=f_j^{(2)}f_kf_j$, with $z$ added first.
		\item $D=f_j^{(3)}f_k$.
		
	In order for the relation to hold, we must have the sum of $A$ and the $C_t$ equal to $D$ and the sum of the $B_t$. As long as the addable nodes are not adjacent and we have no adjacency chains of length three or more, this requires no new techniques, except that we must use the full strength of Lemma \ref{divided}.    
	There will now be ten possible orderings of the addable nodes, which we now list, each with its basic coefficient:
	\begin{itemize}
	\item $\tau_1=(w,x,y,z)$; $Q=	 q_j^{N_j^-(x,\lm)}q_j^{N_j^-(y,\lm)}q_j^{N_j^-(z,\lm)}q_k^{N_k^-(w,\lm)}G_{x,y,z}(q)$
	\item $\tau_2=(x,w,y,z)$; $Q$,
	\item $\tau_3=(x,y,w,z)$; $Q$,
	\item $\tau_3=(x,y,z,w)$; $Q$,
	\item $\tau_5=(w \rightarrow x,y,z)$; $Q_5=q_j^{N_j^-(x,\lm^w)}
	q_j^{N_j^-(y,\lm)}q_j^{N_j^-(z,\lm)}q_k^{N_k^-(w,\lm)}G_{x,y,z}(q)	$
	\item $\tau_6=(x \rightarrow w,y,z)$; $Q_6=q_j^{N_j^-(x,\lm)}q_j^{N_j^-(y,\lm)}q_j^{N_j^-(z,\lm)}q_k^{N_k^-(w,\lm^{x})}G_{x,y,z}(q)$,
	\item $\tau_7=(x,w \rightarrow y,z)$; $Q_7=q_j^{N_j^-(x,\lm)}q_j^{N_j^-(y,\lm^w)}q_j^{N_j^-(z,\lm)}q_k^{N_k^-(w,\lm)}G_{x,y,z}(q)$
	\item $\tau_8=(x,y \rightarrow w,z)$; $Q_8=
	q_j^{N_j^-(x,\lm)}q_j^{N_j^-(y,\lm)}q_j^{N_j^-(z,\lm)}q_k^{N_k^-(w,\lm^{y})}G_{x,y,z}(q)$,
	\item $\tau_9=(x,y,w \rightarrow z)$; $Q_9=
	q_j^{N_j^-(x,\lm)}q_j^{N_j^-(y,\lm)}q_j^{N_j^-(z,\lm^w)}q_k^{N_k^-(w,\lm)}G_{x,y,z}(q)$,
	\item $\tau_{10}=(x,y,z \rightarrow w)$; $Q_{10}=
	q_j^{N_j^-(x,\lm)}q_j^{N_j^-(y,\lm)}q_j^{N_j^-(z,\lm)}q_k^{N_k^-(w,\lm^{z})}G_{x,y,z}(q)$.
	\end{itemize}

We summarize the results for these ten cases in a table, similar to the summary table in the previous proposition. Here again, we have $q_k=q_j^2$ and sometimes it is necessary to use this equality to get the relation. In addition, the addition of a $k$-addable node before a $j$-node will add $2$ rather than $1$ so that The power of $q_j$ will always be odd. We illustrate the difference by doing the calculation for $\tau_1$ in full.

As listed above,  $\tau_1=(w,x,y,z)$.  In this case, 
\[Q=q_j^{N_j^-(x,\lm)}q_j^{N_j^-(y,\lm)}q_j^{N_j^-(z,\lm)}q_k^{N_k^-(w,\lm)}G_{x,y,z}(q).\]
We now calculate the four compositions of $f_j$ and $f_k$ which appear in the quantized Serre relations using the fact that since $w$ is before $x,y,z$ spacially, we have $N_k^-(w,\lm^{x,y,z})=N_k^-(w,\lm)$:

\begin{align*}
	f_kf_j^{(3)}(\lm)=&q_j^{-3}q_j^{N_j^-(x,\lm)+N_j^-(y,\lm)+N_j^-(z,\lm)}f_k(\lm^{x,y,z})\\
	=&q_j^{-3}q_j^{N_j^-(x,\lm)+N_j^-(y,\lm)+N_j^-(z,\lm)}q_k^{N_k(w,\lm^{x,y,z})}\lm_1\\
	=&q_j^{-3}Q\lm_1.\\
\end{align*}
\begin{align*}
		f_jf_kf_j^{2}(\lm)=&q_j^{-1}f_jf_k(q_j^{N_j^-(y,\lm)+N_j^-(z,\lm)} \lm^{y,z)}+q_j^{N_j^-(x,\lm)+N_j^-(z,\lm)}\lm^{x,z}+q_j^{N_j^-(x,\lm)+N_j^-(y,\lm)}\lm^{x,y}\\
		=&q_j^{-1}q_k^{N_k^-(w,\lm)}f_j(q_j^{N_j^-(y,\lm)+N_j^-(z,\lm)}\lm^{y,z}+q_j^{N_j^-(x,\lm)+N_j^-(z,\lm)}\lm^{x,z}+q_j^{N_j^-(x,\lm)+N_j^-(y,\lm)}\lm^{x,y})\\
	=&q_j^{-1}q_k^{N_k^-(w,\lm)}(q_j^{N_j^-(x,\lm^{w,y,z})+N_j^-(y,\lm)+N_j^-(z,\lm)}+q_j^{N_j^-(x,\lm)+N_j^-(y,\lm^{w,x,z})+N_j^-(z,\lm)}\\
	+&q_j^{N_j^-(x,\lm)+N_j^-(y,\lm)+N_j^-(z,\lm)})\lm_1\\
	=&q_j^{-1}Q(q_j^2+q_J^0+q_j^{-2})\lm_1\\
	=&(q_j+q_j^{-1}+q_j^{-3})Q\lm_1.\\
	\\
	f_j^{(2)}f_kf_j(\lm)=&f_j^{(2)}f_k(q_j^{N_j^-(x,\lm)}\lm^x+q_j^{N_j^-(y,\lm)}\lm^y+q_j^{N_j^-(z,\lm)}\lm^z),\\
	=&f_j^{(2)}(q_k^{N_k^-(w,\lm^x)}q_j^{N_j^-(x,\lm)}\lm^{w,x}
		+q_k^{N_k^-(w,\lm^y)}q_j^{N_j^-(y,\lm)}\lm^{w,y}\\
		+&q_k^{N_k^-(w,\lm^z)}q_j^{N_j^-(z,\lm)}\lm^{w,z})\\
	=&q_k^{N_k^-(w,\lm)}q_j^{-1}(q_j^{N_j^-(x,\lm)+N_j^-(y,\lm^{w,x})+N_j^-(z,\lm^{w,x})}+q_j^{N_j^-(x,\lm^{w,y})+N_j^-(y,\lm)+N_j^-(z,\lm^{w,x})}\\
	+&q_j^{N_j^-(x,\lm^{w,y})+N_j^-(y,\lm^{w,z}))+N_j^-(z,\lm)})\lm_1\\
	=&q_j^{-1}Q\left (q_j^{0+(2-2)+(2-2)}+q_j^{2+0+(2-2)}+q_j^{2+2+0)} \right)\lm_1\\
	=&\left(q_j^{-1}+q_j+q_j^3 \right)Q\lm_1.\\
\\
	f_j^{(3)}f_k{\lm}=&q_k^{N_j^-(w,\lm)}f_j^{(3)}(\lm)\\
	=&q_k^{N_j^-(w,\lm)}q_j^{-3}q_j^{N_j^-(x,\lm^w)+N_j^-(y,\lm^w)+N_j^-(z,\lm^w)}\\
	=&q_j^{-3}q_j^{2+2+2}Q\lm_{1}\\
	=&q_j^{3}Q\lm_{1}.\\	
\end{align*}
\begin{center}
	\begin{tabular}{ | l || c | c | c | c | c | c | c | c | }
		\hline
		$\tau$ &$A$ & $B_1$&$B_2$&$B_3$ & $C_1$&$C_2$&$C_3$&$D$ \\ \hline \hline
		$\tau_1$&$q_j^{-3}$ &$q_j$&$q_j^{-1}$&$q_j^{-3}$&$q_j^{-1}$&$q_j$&$q_j^{3}$&$q_j^{3}$\\ \hline
		$\tau_2$ &$q_j^{-3}q_k^2$&$q_j^{-1}$&$q_j^{-1}q_k^2$&$q_j^{-3}q_k^2$&$q_j^{-1}q_k^2$&$q_j^{-1}$&$q_j$&$q_j$\\ \hline
		$\tau_3$ &$q_j^{-3}q_k^2$&$q_j^{-1}q_k$&$q_j^{-1}q_k$&$q_j^{-3}q_k^2$&$q_j^{-1}q_k$&$q_j^{-1}q_k$&$q_j^{-1}$&$q_j^{-1}$\\\hline
		$\tau_4$ &$q_j^{-3}q_k^3$&$q_j^{-1}q_k^2$&$q_j^{-3}q_k^2$&$q_j^{-5}q_k^2$&$q_j^{-5}q_k$&$q_j^{-3}q_k$&$q_j^{-1}q_k$&$q_j^{-3}$\\ \hline
		$\tau_5$ &$0$&$q_j^{-1}$&$0$&$0$&$0$&$q_j^{-1}$&$q_j$&$q_j$\\ \hline
		$\tau_6$ &$q_j^{-3}$&$0$&$q_j^{-3}$&$q_j^{-1}$&$q_j^{-1}$&$0$&$0$&$0$\\ \hline
		$\tau_{7}$&$0$&$0$&$q_j^{-3}q_k$&$0$&$q_j^{-3}q_k$&$0$&$q_j^{-1}$&$q_j^{-1}$\\ \hline 	
		$\tau_8$ &$q_j^{-3}q_k$&$q_j^{-1}$&$0$&$q_j^{-3}q_k$&$0$&$q_j^{-1}$&$0$&$0$\\ \hline 
		$\tau_9$ &$0$&$0$&$0$&$q_j^{-5}q_k^2$&$q_j^{-5}q_k$&$q_j^{-1}q_k$&$0$&$q_j^{-3}$\\ \hline 
		$\tau_{10}$ &$q_j^{-3}q_k^2$&$q_j^{-1}q_k$&$q_j^{-3}q_k$&$0$&$0$&$0$&$q_j^{-1}$&$0$\\ \hline 
				
		\hline
	\end{tabular}
\end{center}

In the case of $j=0$, there are  additional possibilities, because two of the $x_i$ can be adjacent.  These possibilities are
\begin{itemize}
	\item $\tau_{11}=	(w,x \rightarrow  y,z)$
	\item $\tau_{12}=(w \rightarrow x \rightarrow y,z)$
	\item $\tau_{13}=(x \rightarrow y \rightarrow w,z)$
	\item $\tau_{14}=(x \rightarrow y,w,  z)$	
	\item $\tau_{15}=(x \rightarrow y,w \rightarrow z)$
	\item $\tau_{16}=(x \rightarrow y,z \rightarrow w)$
	\item $\tau_{17}=(x \rightarrow y,z, w)$
	\item $\tau_{18}=(w,x, y \rightarrow z)$	
	\item $\tau_{19}=(w \rightarrow x, y \rightarrow z)$	
	\item $\tau_{20}=(x \rightarrow w, y \rightarrow z)$
	\item $\tau_{21}=(x, w, y \rightarrow z)$	
	\item $\tau_{22}=(x,w \rightarrow y \rightarrow z)$	
	\item $\tau_{23}=(x, y \rightarrow z \rightarrow w)$	
	\item $\tau_{24}=(x, y \rightarrow z, w)$	,
\end{itemize}
\noindent with appropriate $Q$. 

 For the case $k=n$, we also have some special cases.

\begin{itemize}
	\item $\tau_{25}$ $x \rightarrow w \rightarrow y,z$
	\item $\tau_{26}$ $x,y \rightarrow w \rightarrow z$
	\item $\tau_{27}$ $x, y \rightarrow w, z \rightarrow w$
	\item $\tau_{28}$ $ w \rightarrow x, w \rightarrow y, z$
	\item $\tau_{29}$ $ x, w \rightarrow y, w \rightarrow z$
	\item $\tau_{30}$ $x \rightarrow w$, $y \rightarrow w, z$
\end{itemize}

To finish the proof involves checking  the coefficient of change for each $\tau_t$ and each operation. In  all these cases, some of the coefficients are zero.

In the case of adjacent addable nodes, we must alter Lemma \ref{divided} because the factors $1+q_j^2$ are needed to get the divided powers. However, whenever we have a divided power which for two non-adjacent $j$-nodes, we must use the previous Lemma \ref{divided}, and when we have an adjacency condition like $x \rightarrow y$ but use $y$ in a divided power with a different $j$-node, we not only use Lemma \ref{divided}, we must also multiply by $1+q^2$, which makes the calculations complicated. Whenever there is an entry which is a sum, it results from such a multiplication by $1+q^2$.

\begin{lem}\label{divided2}
	For $\Lm = \Lm_i$, $i \neq 0$, let $\mu$ be a partition and suppose that $j = 0$.
	\begin{enumerate}
		\item If $\mu$ has addable, adjacent $j$-nodes $x,y$ with $x$ before $y$  and   let $Q_{x,y}(\mu)=q_j^{N_j^-(x,\mu)+N_j^-(y,\mu^x)}$.
		Then in $f_j^{(2)}(\mu)$, the coefficient of $\mu^{x,y}$ is 
		\[
		q_jQ_{x,y}.
		\]
		\item If $\mu$ has three addable $j$-nodes $x,y,z$ with $x$ before $y$ and $y$ before $z$, define 
		$Q_{x,y,z}(\mu)=q_j^{N_j^-(x,\mu)+N_j^-(y,\mu^x)+N_j^-(z,\mu)}$ if $x \rightarrow y$  or 	$Q_{x,y,z}=q_j^{N_j^-(x,\mu)+N_j^-(y,\mu)+N_j^-(z,\mu^y)}$ if $y \rightarrow z$
		Then in $f_j^{(3)}(\mu)$, the coefficient of $\mu^{x,y,z}$ is 
		\[
		q_j^{-1}Q_{x,y,z}.
		\]
	\end{enumerate}
\end{lem}
\begin{proof}
	The main difference from the previous $j=0$ case is that we need the $1+q^2$ factor for the divided product. We illustrate by doing one case, that with $y \rightarrow z$.
	\begin{align*}
		f_j^{3}(\mu)=&(q_j^{N_j^-(x,\mu)+N_j^-(y,\mu^x)+N_j^-(z,\mu^{x,y})}+
			q_j^{N_j^-(y,\mu)+N_j^-(x,\mu^y)+N_j^-(z,\mu^{x,y})}\\
		+&q_j^{N_j^-(y,\mu)+N_j^-(z,\mu^y)+N_j^-(x,\mu^{y,z})})(1+q_j^2)\mu^{x,y,z}+\dots\\
		=&Q(q_j^{-4}+q_j^{-2}+1)(1+q_j^2)\mu^{x,y,z}+\dots.\\
	\end{align*}
	 To get the divided power we divide by $[3]_{q_j}[2]_{q_j}=(q_j^{-2}+1+q_j^{2})(q_j^{-1}+q_j)$ and get the desired result.
\end{proof}

 As an illustration, we show the work for $\tau_{12}=(w \rightarrow x \rightarrow y,z)$. Since we must have $w$ before we can add $x$ or $y$, we see immediately that $A=0$, $B=0$, $C_1=C_2=0$, so we need only calculate $C_3$ and $D$. 
\begin{align*} 
	f_j^{(2)}f_kf_j(\lm)=&q_j^{N(z,\lm)}q_k^{N_k(w,\lm^z)}q_jQ_{x,y}(\lm^w)  \cdot\lm_{11}+\dots\\
	=&q_jQ \cdot \lm_{11}+\dots\\
	f_j^{(3)}f_k(\lm)=&q_k^{N_k^-(w,\lm)} q_j^{-1}Q_{x,y,z}(\lm^w)  \cdot \lm_{11}\\
		=&q_jQ \cdot \lm_{11}+\dots,
\end{align*}
The last equality arises from $N_j^-(z,\lm^w)=N_j^-(z,\lm) +2$.
\end{itemize}
\end{proof}	
\begin{center}
	\begin{tabular}{ | l || c | c | c | c | c | c | c | c | }
		\hline
		$\tau$ &$A$ & $B_1$&$B_2$&$B_3$ & $C_1$&$C_2$&$C_3$&$D$ \\ \hline \hline
		$\tau_{11}$  &$q_j^{-1}$&$0$&$q_j+q_j^{3}$&$q_j^{-1}$&$q_j+q_j^{3}$&$0$&$q^5$&$q^5$\\ \hline
		$\tau_{12}$  &$0$&$0$&$0$&$0$&$0$&$0$&$q_j$&$q_j$\\ \hline
		$\tau_{13}$  &$q_j^{-1}$&$0$&$0$&$q_j^{-1}$&$0$&$0$&$0$&$0$\\\hline
		$\tau_{14}$  &$q_k^2q_j^{-1}$&$0$&$q_kq_j^{-1}+q_kq_j$&$q_k^2q_j^{-1}$&$q_kq_j^{-1}+q_kq_j$&$0$&$q_j$&$q_j$\\ \hline
		$\tau_{15}$  &$0$&$0$&$0$&$q_k^2q_j^{-3}$&$q_kq_j^{-3}+q_kq_j^{-1}$&$0$&$0$&$q_j^{-1}$\\ \hline
		$\tau_{16}$  &$q_k^2q_j^{-1}$&$0$&$q_kq_j^{-1}+q_kq_j$&$0$&$0$&$0$&$q_j$&$0$\\ \hline
		$\tau_{17}$  &$q_k^3q_j^{-1}$&$0$&$q_k^2q_j^{-1}+q_k^2q_j$&$q_k^2q_j^{-3}$&$q_kq_j^{-3}+q_k^2q_j^{-1}$&$0$&$q_kq_j$&$q_j^{-1}$\\ \hline 	
		$\tau_{18}$  &$q_j^{-1}$&$q_j^3$&$0$&$q_j^{-1}+q_j$&$q_j$&$q_j^3+q_j^5$&$0$&$q_j^5$\\ \hline 
		$\tau_{19}$   &$0$&$q_j$&$0$&$0$&$0$&$q_kq_j^{-1}+q_kq_j$&$0$&$q_k^2q_j^{-1}$\\ \hline 
		$\tau_{20}$  &$q_j^{-1}$&$0$&$0$&$q_j^{-1}+q_j$&$q_j$&$0$&$0$&$0$\\ \hline 
		$\tau_{21}$  &$q_kq_j^{-1}$&$q_j$&$0$&$q_kq_j^{-1}+q_kq_j$&$q_kq_j$&$q_kq_j^{-1}+q_kq_j$&$0$&$0$\\ \hline
		$\tau_{22}$  &$0$&$0$&$0$&$0$&$q_kq_j$&$0$&$0$&$q_j^3$\\ \hline 	
		$\tau_{23}$  &$q_kq_j^{-1}$&$q_j$&$0$&$0$&$0$&$0$&$0$&$0$\\ \hline
		$\tau_{24}$  &$q_k^3q_j^{-1}$&$q_k^2q_j$&$0$&$q_j^{-1}+q_j$&$q_kq_j^{-3}$&$q_j^{-1}+q_j$&$0$&$q_j^{-1}$\\ \hline 	 \hline 

	\end{tabular}
\end{center}

For $j=n-1, k=n$:

\begin{center}
	\begin{tabular}{ | l || c | c | c | c | c | c | c | c | }
		\hline
		$\tau$ &$A$ & $B_1$&$B_2$&$B_3$ & $C_1$&$C_2$&$C_3$&$D$ \\ \hline \hline
		$\tau_{25}$  &$0$&$0$&$q_kq_j^{-1}$&$0$&$q_kq_j^{-1}$&$0$&$0$&$0$\\ \hline 
		$\tau_{26}$  &$0$&$0$&$0$&$q_kq_j^{-3}$&$0$&$q_j^{-1}$&$0$&$0$\\ \hline
		$\tau_{27}$  &$q_kq_j^{-3}$&$q_j^{-1}$&$0$&$0$&$0$&$0$&$0$&$0$\\ \hline 	
		$\tau_{28}$  &$0$&$0$&$0$&$0$&$0$&$0$&$q_j^{-1}$&$q_j^{-1}$\\ \hline
		$\tau_{29}$  &$0$&$0$&$0$&$0$&$q_kq_j^{-5}$&$0$&$0$&$q_j^{-3}$\\ \hline
		$\tau_{30}$  &$q_j^{-3}$&$0$&$0$&$q_k^{-3}$&$0$&$0$&$0$&$0$ \\ 
		\hline 
		\hline 
	\end{tabular}
\end{center}

We summarize the results of these three propositions in the following theorem:

\begin{thm*}
	Let $\mathfrak{g}$ be the affine Lie algebra of type $A^{(2)}_{2n}$ for a positive integer $n$. The level one Fock spaces $\mathcal{F}_i$ given by $i$-corner partitions are modules for the quantum enveloping algebra
	$U_q(\mathfrak{g})$.
\end{thm*}	
\begin{proof}
	For $i=0$, this was done by Leclerc and Thibon \cite{LT}, using strict partitions and relying on the theory of perfect crystals of Kashwara et al. in \cite{KMPY}. For all $i \neq 0$, this results from Proposition \ref{first3} \ref{S.2}, \ref{S.3}. Alternatively, one can use Prof. \ref{first3} and Appendix B of \cite{KMPY}.
\end{proof}
\begin{examp}
 Let us try to calculate the results of applying the quantized Serre relations, using  the conjectural  canonical basis elements calculated in \cite{A-O}.  
\begin{itemize}
	\item \textbf{Case i=1,j=2, $a_{ij}=-1$} $\ell=1-a_{ij}=2$.  The first case where applying this relation is not trivial is content $(0,0,1,1)$, defect $1$.  We let $v$=\young(3,2)+$q^2$\young(32) be the canonical basis vector as above. Then
	\begin{align*}
	f_2(v)=&(q^{-2}+q^2)\young(32,2)\\
	f_1^{(2)}f_2(v)=&(q^{-2}+q^2)\young(321,2,1)
	\end{align*}
	\begin{align*}
	f_1(v)=& \young(3,2,1)	+q^2\young(321)\\
	f_2f_1(v)=&\young(32,2,1)	+q^2\young(321,2)\\
	f_1f_2f_1(v)=&(q^{-2}+q^{2})\young(321,2,1)
	\end{align*}
Since $f_1^{(2)}(v)=0$, this shows that the result of applying the Serre relation
\[
f_1^{2}f_2-f_1f_2f_1+f_2f_1^{2}
\]
to $v$ is $0$.
\item\textbf{Case i=0,j=1, $a_{ij}=-2$,$\ell=1-a_{ij}=3$}: We apply this for the unique canonical basis vector $w$ with content $(0,1,2,1)$ and defect 1, given by
\[
w=\young(32,2,1)+q^2\young(321,2)
\] 
Since $f_0^{(3)}(w)=0$, we have only three terms to compute.

\begin{align*}
	f_1(w)=&(q^{-2}+q^2)\young(321,2,1)\\
	f_0^{(3)}f_1(w)=& (q^{-2}+q^2)(\young(3210,2,1,0,0)+q^2 \young(32100,2,1,0))\\
	f_0^{(2)}f_1f_0(w)=&(q^{-2}+1+q^{2})(\young(3210,2,1,0,0)+q^2 \young(32100,2,1,0))\\
	f_0f_1f_0^{(2)}(w)=&f_0 (  \young(32,2,1,0,0,1)+ q^2 \young(321,2,1,0,0)	+q^2\young(32100,2,1)	+q^4\young(321001,2))\\
	=&\young(3210,2,1,0,0)+q^2\young(32100,2,1,0).
\end{align*} 
Thus altogether, $f_0^{(3)}f_1(w)-f_0^{(2)}f_1f_0(w)+f_0f_1f_0^{(2)}(w)=0$.  
\end{itemize}
\end{examp}

\noindent Keywords:  affine Lie algebra, highest weight representation, spin blocks, cyclotomic Hecke algebras

\noindent Conflict of Interest: None\\
\noindent Data availability: No data\\
\noindent Ola Amara-Omari
ORCID: 0000-0002-0651-5123

\noindent Mary Schaps
ORCID: 0000-0002-9868-6244

\end{document}